\documentclass[10pt]{article}
\usepackage[english]{babel}
\usepackage[dvips]{color}
\usepackage{graphicx, amsmath, amssymb, amsfonts, amsthm}
\usepackage{slashbox}
\usepackage{color}

\newtheorem{theorem}{Theorem}[section]

\newtheorem{lemma}[theorem]{Lemma}

\newtheorem{remark}[theorem]{Remark}

\textwidth=160truemm      % width of the text
\oddsidemargin=13mm \evensidemargin=13mm
\textheight=211truemm     % height of the text
\hoffset=-10mm \voffset=5mm
\topmargin=-20mm  % top margin  (+ 1 inch)
%\mathsurround=2pt
\headsep=7mm

% \makeatletter   % numbering of equations
% \def \overset{\stackrel}\@addtoreset{equation}{section}
% \renewcommand{\theequation}{\thesection.\arabic{equation}}
% \makeatother

   % labeling in enumerate
\newcommand{\refpart}[1]{{\it (#1)}}                 % for brute reference to theorem/lemma parts
\newcommand{\ppt}{\hspace{1pt}}                   % a tiny extra space for HE, [ ] and above arrows

% Definitions of Heun and 2F1 functions for formulas:
% Example: \heun{-1}{0}{2a,\,2b}{2c-1;\,a+b-c+1}{x} \equal \hpg{2}{1}{a,\,b\,}{c}{x^2}
\newcommand{\heun}[5]{\mbox{\rm Hn}\!\left( {#1 \atop #2} \left|  {#3   \atop #4} \right|\, #5 \right) }

\newcommand{\hpgo}[2]{{}_{#1}\mbox{\rm F}_{\!#2}}
\newcommand{\hpg}[5]{{}_{#1}\mbox{\rm F}_{\!#2}\!
  \left(\left.{#3 \atop #4}\right|\, #5 \right) }

                                    % a sequence of local exp differences
\newcommand{\hpgde}[1]{E(#1)}                         % hypergeometric equation by local exp. diffrences
\newcommand{\heunde}[1]{\mbox{\it HE\ppt}(#1)}  % Heun;s equation by local exp. differences

      % branching pattern as 3 partitions
              % repeating branching indices: [2]_5=2+2+2+2+2

% compositions of 2 or 3 transformations; order reversed

\newcommand{\degr}{D}    % {{\bf d}}                % degree, not to confuse with a parameter of Heun equation
\newcommand{\la}{\alpha}                                     % parametric local exp. differences
\newcommand{\lb}{\beta}

\newcommand{\pback}[1]{\stackrel{\ppt#1}{\longleftarrow}}   % pullback arrow, with the degree D

\newcommand{\PP}{{\Bbb P}}
\newcommand{\ZZ}{{\Bbb Z}}
\newcommand{\CC}{{\Bbb C}}

\newcommand{\QQ}{{\Bbb Q}}

\newcommand{\equal}{&\!\!=\!\!&}

\begin{document}

%\keywords{Heun equation, hypergeometric equation, Heun function, Gauss hypergeometric function.}
%\mathclass{Primary 33E30; Secondary 33C05.}  %34M35
%\abbrevauthors{G. Filipuk and R. Vidunas}
%\abbrevtitle{Transformations of the Heun equation}

\title{Degenerate and dihedral Heun functions with parameters}

\author{Raimundas Vidunas\footnote{Lab of Geometric \& Algebraic Algorithms,
        Department of Informatics \& Telecommunications,
        National Kapodistrian University of Athens,
        Panepistimiopolis 15784, Greece. E-mail: {\sf rvidunas@gmail.com}.
        Supported by JSPS grant 20740075, and 
        the EU (European Social Fund) and Greek National Fund through 
        the operational program ``Education and Lifelong Learning" of the
        National Strategic Reference Framework, research funding program
        ``ARISTEIA'', project ``ESPRESSO: Exploiting Structure in 
        Polynomial Equation and System Solving in Physical Modeling".}}
\date{\today}
\maketitle

\begin{abstract}
Just as with the Gauss hypergeometric function, particular cases 
of the local Heun function can be Liouvillian (that is, ''elementary") functions. 
One way to obtain these functions is by pull-back transformations
of Gauss hypergeometric equations with Liouvillian solutions.
This paper presents the Liouvillian solutions of Heun's equations
that are pull-backs of the parametric hypergeometric equations with % infinite 
cyclic or dihedral monodromy groups.
\end{abstract}

\section{Introduction}

This paper presents all pull-back transformations to Heun equations from Gauss
hypergeometric equations  with {\em Liouvillian solutions} and a continuous parameter. 
This gives interesting series of examples of Liouvillian Heun functions.
A Liouvillian function lies in a {\em Liouvillian extension} of $\CC(x)$, that is \cite{Kovacic},
an extension of differential fields generated by sequentially adjoining 
a finite number of integrals, exponentials of integrals and algebraic functions.

The Heun equation
\begin{equation}
\label{eq:HE}
\frac{{\rm d}^2Y(x)}{{\rm
d}x^2}+\biggl(\frac{c}{x}+\frac{d}{x-1}+\frac{a+b-c-d+1}{x-t}\biggr)\frac{{\rm
d}Y(x)}{{\rm d}x}+\frac{ab\,x-q}{x(x-1)(x-t)} Y(x)=0
\end{equation}
is a canonical second-order Fuchsian differential equation on the Riemann sphere~$\PP^1$
with $4$ regular singularities. The singular points and the local exponents 
are usefully encoded by the Riemann P-symbol
\begin{equation}
\label{eq:PsymbolHE}
P\left\{
\begin{array}{cccc|c}
0 & 1 &t & \infty &x\\ \hline
0 & 0 &0 & a & \\
1-c & 1-d & c+d-a-b & b &
\end{array}
\right\}.
\end{equation}
The local solution at $x=0$ with the local exponent $0$ and the value $1$
of Heun's equation is denoted by
\begin{equation} \label{eq:HF}
\heun{\,t\,}{q}{\,a,\,b\,}{c;\,d}{\,x\,}.
\end{equation}
Generally, the Heun functions are transcendental, and their monodromy is not known.
But they can be Liouvillian for special values of the parameters $a,b,c,d,q,t$. 
The Kovacic algorithm \cite{Kovacic} implies that Heun's equation has % a basis  of 
Liouvillian solutions if and only if its monodromy is either reducible, %infinite cyclic, 
or infinite dihedral, or finite.
The most straightforward case is a {\em Heun polynomial}; then necessarily
$a$ or $b$ equals zero or a negative integer, and the monodromy representation of 
the Heun equation is reducible. 

The simpler Gauss hypergeometric equation
\begin{equation}
\label{eq:GHE}
\frac{{\rm d}^2y(z)}{{\rm d}z^2}+
\left(\frac{C}{z}+\frac{A+B-C+1}{z-1}\right)\,\frac{{\rm d}y(z)}{{\rm d}z}+\frac{AB}{z\,(z-1)}\,y(z)=0
\end{equation}
and its hypergeometric $\hpgo21$ solutions are much better understood. 
Some Heun equations are pull-back transformations
\begin{equation}
\label{eq:algtransf}
z\longmapsto\varphi(x), \qquad y(z)\longmapsto
Y(x)=\theta(x)\,y(\varphi(x)),
\end{equation}
of a specialized Gauss hypergeometric equation. 
Here $\varphi(x)$ is a rational function and $\theta(x)$ is a radical
function ( i.e., a product of powers of rational functions.) 
Monodromy of those Heun equations is then easily computed from
the monodromy of the hypergeometic equations to which a pull-back is applied.
Systematic classification of these pull-back transformations to Heun equations
from hypergeometric equations with a continuous parameter but {\em no Liouvillian solutions}
has been done in \cite{Vidunas2009b}. In total, there are 61 such transformations
up to fractional-linear transformations (of both hypergeometric and Heun equations) and algebraic conjugation. For example, the quadratic transformation P1 puts no restrictions on the parameters
of the $\hpgo21$ function, and gives the formula
\begin{eqnarray} \label{H-2F1 1tr}
\heun{-1}{0}{2a,\,2b}{2c-1;\,a+b-c+1}{\,x} \equal
\hpg{2}{1}{a,\,b\,}{c}{\,x^2}.
\end{eqnarray}
The 61 transformations employ 48 different rational functions $\varphi(x)$ up to M\"obius transformations
and algebraic conjugation \cite{HeunClass}. 
Those rational functions are all {\em Belyi functions} (or {\em Belyi coverings} $\PP^1\to\PP^1$)
as they branch (that is, $\varphi'(x)=0$) only above $\varphi(x)\in\{0,1,\infty\}$.
The branching fibers are exactly the singularities  $z=0$, $z=1$, $z=\infty$ of the hypergeometric equation.

This paper complements \cite{HeunClass}, \cite{Vidunas2009b} by showing all
pull-back transformations to Heun equations from hypergeometric equations 
with Liouvillian solutions and a continuous parameter. 
The hypergeometric-to-Heun pull-back transformations 
with no parameters and no Liouvillian solutions are classified in \cite{VidunasHoeij}. 
There are 366 Galois orbits up to M\"obius transformations. 
Liouvillian Heun functions with no parameters include algebraic Heun functions
that are not classified yet. Kleinian pull-back transformations \cite{Klein1877}
for their Heun equations (with finite monodromy) are of particular interest.

\section{Recalling the classification of pull-backs}

Before recalling the classification scheme  \cite{HeunClass} of hypergeometric-to-Heun transformations,
let us introduce some notation used there (and in \cite{Vidunas2009b}).  
Let $E(\alpha,\beta,\gamma)$ denote a hypergeometric equation with the exponent differences
$\alpha,\beta,\gamma$ assigned to the singular points in any order.
The exponent differences for (\ref{eq:GHE}) are $1-c$, $c-a-b$, $a-b$,
since Riemann's P-symbol is
\begin{equation}
\label{eq:PsymbolGHE}
P\left \{\begin{array}{ccc|c}
0 & 1 & \infty & z\\ \hline
0 & 0 & a & \\
1-c & c-a-b & b & 
\end{array} \right\}.
\end{equation}
Reflecting the generic set of 24 Kummer's $\hpgo21$ solutions of (\ref{eq:GHE}),
the notation $E(\alpha,\beta,\gamma)$ denotes any of the 24 corresponding hypergeometric equations.
Similarly, let $E(\alpha,\beta)$ denote a Fuchsian equation with two singularities and the exponent
differences $\alpha,\beta$. It exists if and only if $\alpha=\pm\beta$, 
formally coinciding then with $E(1,\alpha,\alpha)$. Otherwise, the equation $E(1,\alpha,\beta)$ has a logarithmic singularity.
Finally, let $\heunde{\alpha,\beta,\gamma,\delta}$ denote a Heun equation with its exponent differences
equal to $\alpha,\,\beta,\,\gamma,\,\delta$ in some order. 
The exponent differences for~(\ref{eq:HE}) are 
\begin{equation}
1-c, \quad 1-d, \quad c+d-a-b, \quad a-b,
\end{equation}
as evident from (\ref{eq:PsymbolHE}). The analogue of $24=2^{3-1}\cdot3!$ Kummer's 
hypergeometric solutions is the set of $192=2^{4-1}\cdot4!$ Maier's Heun solutions \cite{Maier10}. 
They are related by fractional-linear transformations that permute the 4 singular points and
interchange the local Heun solutions at them \cite[Appendix B]{Vidunas2009b}. 
The notation $\heunde{\alpha,\beta,\gamma,\delta}$ does not specify the parameters $t,q$,
hence it does not determine the monodromy.

Generally, a pull-back transformation of a hypergeometric equation $E(\alpha,\beta,\gamma)$ 
with respect to a covering  \mbox{$\varphi:\PP^1\to\PP^1$} has singularities 
at the branching points of $\varphi$ and 
above the singularities $z=0$, $z=1$, $z=\infty$ of  the hypergeometric equation. % (\ref{eq:GHE}). 
To have just 4 singularities after the pull-back (when $\deg \varphi>2$), we must have regular points
with $\varphi(x)\in\{0,1,\infty\}$.
As explained in \cite[\S 2]{HeunClass}, an $x$-point with $\varphi(x)\in\{0,1,\infty\}$ is regular
for the pulled-back equation only if the exponent difference of $E(\alpha,\beta,\gamma)$ at $\varphi(x)$
equals $\pm1/k$, where $k$ is the branching order of $\varphi$ at the $x$-point. 
The method \cite{HeunClass} to get hypergeometric-to-Heun transformations is to restrict 
some of $\alpha,\beta,\gamma$ to reciprocals of integers $k_z$, %inverse integer values $1/k_z$, 
and look for coverings $\PP^1\to\PP^1$ with sufficiently many points of branching order $k_z$ 
in respective fibers. To leave a free parameter, at most two among $\alpha,\beta,\gamma$ are restricted.
We refer to the fibers of the singularities with restricted exponent differences as
{\em restricted fibers}. A useful consequence of the Hurwitz formula is this.
\begin{lemma}  \label{lm:hurwitz}
If $\varphi(x)$ is  a Belyi function of degree $d$, 
the number of distinct points with $\varphi(x)\in\{0,1,\infty\}$ equals $d+2$.
Otherwise the number of distinct points is greater. 
If the number of distinct points with $\varphi(x)\in\{0,1,\infty\}$ equals $d+3$,
there is exactly one branching point with $\varphi(x)\not\in\{0,1,\infty\}$,
and its branching order equals $2$.
\end{lemma}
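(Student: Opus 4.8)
The plan is to derive all three assertions from a single application of the Riemann--Hurwitz formula, organizing the ramification data fiber by fiber. Since $\varphi\colon\PP^1\to\PP^1$ has both domain and target of genus~$0$, the formula reads $\sum_P (e_P-1)=2d-2$, where the sum runs over all points $P$ of the domain and $e_P$ denotes the ramification index of $\varphi$ at $P$.

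The key observation is how a single fiber contributes to this sum. If $\varphi^{-1}(y)$ consists of $r_y$ distinct points, their ramification indices partition $d$, so $\sum_{P\in\varphi^{-1}(y)}(e_P-1)=d-r_y$, independently of how the multiplicities are distributed within the fiber. I would apply this to the three fibers over $0$, $1$, $\infty$, write $N=r_0+r_1+r_\infty$ for the number of distinct points with $\varphi(x)\in\{0,1,\infty\}$, and collect all ramification over the remaining points of the target into a single nonnegative integer $B$. Splitting the Riemann--Hurwitz sum accordingly gives $3d-N+B=2d-2$, that is,
$$
N = d+2+B.
$$

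With this identity in hand, each assertion is immediate. If $\varphi$ is a Belyi function, there is no ramification away from $\{0,1,\infty\}$, so $B=0$ and $N=d+2$. Otherwise $\varphi$ ramifies at some point outside these fibers, forcing $B\geq1$ and hence $N\geq d+3$, which is the second claim. Finally, if $N=d+3$ then $B=1$; since $B=\sum(e_P-1)$ is a sum of nonnegative integers equal to $1$, exactly one term is nonzero and equals~$1$, meaning there is a unique branching point with $\varphi(x)\notin\{0,1,\infty\}$ whose branching order is~$2$. No step presents a genuine obstacle here; the only point that requires care is the fiber-contribution identity $\sum_{P\in\varphi^{-1}(y)}(e_P-1)=d-r_y$, since it is the hinge that converts the purely numerical Riemann--Hurwitz count into a statement about the number of distinct points in the fibers.
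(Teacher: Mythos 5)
Your argument is correct and complete: the fiber-contribution identity $\sum_{P\in\varphi^{-1}(y)}(e_P-1)=d-r_y$ combined with Riemann--Hurwitz gives $N=d+2+B$, from which all three assertions follow immediately. The paper itself does not prove the lemma but defers to \cite[Lemma 2.2]{HeunClass}; your write-up is the standard Riemann--Hurwitz count that such a reference would contain, so you have simply made explicit what the paper leaves to a citation.
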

\begin{proof} See  \cite[Lemma 2.2]{HeunClass}. Though the definition of Belyi function 
used in \cite{HeunClass} allows branching above any set of 3 points, not necessarily $\{0,1,\infty\}$,
its proof is valid.
\end{proof}
The following Diophantine inequality is derived in \cite[\S 3.1]{HeunClass} for pull-back functions
$\varphi(x)$ in hypergeometric-to-Heun transformations:
\begin{equation}
\frac{2}{d}+\sum_{z\in S} \,\frac1{k_z} \ge 1.
\end{equation}
Here $d=\deg\varphi$, all $k_z>0$, 
and $S\subset\{0,1,\infty\}$ is the set of points with restricted exponent differences. 
The tables in \cite[\S 3]{HeunClass} skip the following two cases:
\begin{itemize}
\item $|S|=1$ and the sole $k_z=1$. 
The singularity of $E(1,\alpha,\beta)$ with the exponent difference $1$
must be non-logarithmic (in order to have regular points in its fiber after the pull-back),
hence \mbox{$\beta=\pm\alpha$}. The equation $E(1,\alpha,\alpha)$ actually has just two singularities,
up to transformation (\ref{eq:algtransf}) with \mbox{$\varphi(x)\in\{x,1/x\}$}.
The monodromy representation is completely reducible, generated by one element (thus {\em cyclic}). 
\item  $|S|=2$ and both $k_z=2$. The monodromy of $E(1/2,1/2,\alpha)$ is a dihedral group;
it is infinite when $\alpha\not\in\QQ$. 
\end{itemize}
The hypergeometric equations $E(1,\alpha,\alpha)$ and $E(1/2,1/2,\alpha)$ have bases of Liouvillian solutions. The focus of the remaining sections is on pull-back transformation from them to Heun equations.
As we will see, there are pull-back transformations of any degree $d>1$ in both cases.
The pull-back coverings $\varphi(x)$ are Belyi functions, except
% (as predicted  by \cite[Proposition 2.3]{HeunClass})
a series of composite coverings for pull-backs  from $E(1/2,1/2,\alpha)$,
described in Remark \ref{rm:nonb} here.
The obtained pull-back transformations give interesting parametric cases of Heun equations with 
Liouvillian solutions. 

\begin{remark} \rm
The full list of hypergeometric-to-Heun pull-backs with a continuous parameter and Liouvillian solutions
is formed by the transformations described in Theorems \ref{th:degen}, \ref{th:dihedral} here,
and Liouvillian specializations of the pull-backs P1, P2, P3, P15, P19, P20, P51 in \cite{Vidunas2009b} 
with more than one parameter.

Of particular interest are transformations between hypergeometric and Heun polynomials. 
For example, one may assume $a$ is a non-positive integer in (\ref{H-2F1 1tr}).
Here are instances of polynomial formulas induced by P15, P19 and P20:
\begin{eqnarray} \label{eq:c1}
\heun{1/4}{-9na/4}{-3n,\,3a}{\frac12;\,a-n+1/2}{x}\equal
\hpg{2}{1}{-n,\,a\,}{1/2}{\,x(4x-3)^2},\\
\heun{9}{\widehat{q}_1\!}{-3n,\,a-2n}{a-n+1/3;1-2n-2a}{x} \!\!\equal\!\!
(1-x)^{2n}\,\hpg{2}{1}{-n,\,a\,}{a-n+1/3}{\,-\frac{x(x-9)^2}{27(x-1)^2}},\\
\heun{9/8}{\widehat{q}_2\!}{-4n,\,a-3n}{3a-3n-1/2;a-n+1/2}{x} \!\!\equal\!\!\!
\left(1-\frac{8x}{9}\right)^{\!3n}\!\hpg{2}{1}{-n,\,a\,}{\!a-n+1/2}{\frac{64x^3(x-1)}{(8x-9)^3}}\!,\qquad
\end{eqnarray}
where $\widehat{q}_1=9na+18n^2-6n$, $\widehat{q}_2=-9na+9n^2+3n/2$. 
Reducible monodromy always leads to a polynomial (up to a power factor) solution.
A necessary reducibility condition for Heun's equation (\ref{eq:HE}) is 
$a,b,c-a,c-b,d-a,d-b,c+d-a$ or $c+d-b\in\ZZ$; check this in \cite{Maier10}. 

Non-reducible hypergeometric or Heun equations with a continuous parameter 
have dihedral monodromy. The hypergeometric equation is then $E(k+1/2,\ell+1/2,\alpha)$
with $k,\ell\in\ZZ$. The continuous parameter is preserved under these listed under
these transformations from \cite{Vidunas2009b}:
\begin{itemize}
\item the quadratic pull-back P1 transforms to $\heunde{k+\frac12,k+\frac12,2\ell+1,2\alpha}$
or $\heunde{2k+1,2\ell+1,\alpha,\alpha}$;
\item the quartic pull-back P2 transforms $E(\frac12,k+\frac12,\alpha)$ 
to $\heunde{2k+1,2k+1,2\alpha,2\alpha}$;
\item the quartic pull-back P3 transforms $E(\frac12,k+\frac12,\alpha)$ to 
 $\heunde{k+\frac12,k+\frac12,2k+1,4\alpha}$ or \mbox{$\heunde{4k+2,\alpha,\alpha,2\alpha}$};
 \item the cubic pull-back P15 transforms $E(\frac12,k+\frac12,\alpha)$ to 
 $\heunde{\frac12,k+\frac12,2k+1,3\alpha}$ or \mbox{$\heunde{\frac12,3k+\frac32,\alpha,2\alpha}$};
 \item the quartic pull-back P19 transforms $E(\frac12,k+\frac12,\alpha)$ 
to $\heunde{k+\frac12,3k+\frac32,\alpha,3\alpha}$.
\end{itemize}
This gives examples of Heun equations with cyclic or dihedral monodromy.
Additional such examples arise in pull-backs from general hypergeometric equations 
with cyclic monodromy \cite{Vidunas2007}, that is, $E(k,\alpha,\alpha+\ell)$  with $k,\ell\in\ZZ$, $|k|>|\ell|$. 
The continuous parameter is preserved only under the quadratic transformation P1, giving
 $\heunde{2k,\alpha,\alpha,2\alpha+2\ell}$ or $\heunde{k,k,2\alpha,2\alpha+2\ell}$.
 \end{remark}

\section{Transformations from the cyclic $\hpgde{1,\la,\la}$}
\label{sec:degenerate}

As already mentioned, the equations $\hpgde{1,\la,\la}$ are degenerate hypergeometric equations
with (essentially) two singularities. The monodromy representation is completely reducible, cyclic.
The 24 Kummer's solutions are constant or power functions, 
except %\footnote{The similar formula in \cite[(29)]{VidunasFE} 
% has to be corrected by the factor $-1$ in the $a\neq 0$ case.}
\begin{equation} \label{reghpg}
\hpg{2}{1}{1-a,\,1}{2}{\,z\,}=\left\{ \begin{array}{cl} \displaystyle
\frac{1-(1-z)^{a}}{a\,z}, & \mbox{if } a\neq 0, \vspace{3pt}\\
\displaystyle -\frac1z\,\log(1-z), & \mbox{if } a=0. \end{array} \right.
\end{equation}
The similar formula in \cite[(29)]{VidunasFE} 
has to be corrected by the factor $-1$ in the $a\neq 0$ case.
As presented in \cite[\S 5]{VidunasFE}, there are pull-back transformations
$\hpgde{1,\la,\la}\pback{n}\hpgde{1,n\la,n\la}$. Up to M\"obius transformations,
they are the cyclic coverings $z\mapsto x^n$.  A nontrivial transformation formula is 
\begin{equation} \label{reghpgtr}
\hpg{2}{1}{1-na,\,1}{2}{\,x}=\psi(x)\,\hpg{2}{1}{1-a,\,1}{2}{nx\psi(x)}, 
\quad\mbox{with } \psi(x)=\frac{1-(1-x)^{n}}{n\,x}.
\end{equation}
The expression $\psi(x)$ is a polynomial of degree $n-1$.
The pull-back coverings for transforming $E(1,\alpha,\alpha)$ to Heun's equation
have similar expressions. Most of the Heun's solutions are trivial power functions.
\begin{theorem} \label{th:degen}
\begin{enumerate}
\item Pull-back transformations from $\hpgde{1,\la,\la}$ to Heun equations exist  and are unique
up to M\"obius transformations for any pair $(M,N)$ of positive integers. The transformed Heun equation 
is $\heunde{2, N\la, M\la, D\la}$, where $D=M+N$. %Besides:
\item The pull-back covering is a Belyi map of degree $D$. 
Up to M\"obius transformations, it is %the pull-back covering is
\begin{equation} \label{eq:degenphi}
\varphi(x) = 1-(1-x)^N\left(1+\frac{Nx}{M}\right)^M.
\end{equation}
\item The following identities with Heun's functions hold, for non-zero $a,b,a+b$:
\begin{eqnarray} \label{eq:cycheun1}
\heun{1+b/a}{-b\,(a+1)}{a,\,-b}{1+a;-1}{\,1-x\,}
 \!\equal \left(\frac{a\,x+b}{a+b}\right)^{\!b}, \\  \label{eq:cycheun2}
\heun{1+a/b}{-a\,(b+1)}{-a,\,b}{1+b;-1}{1+\frac{a x}{b}}
 \!\equal \left(\frac{a\,(1-x)}{a+b}\right)^{\!a}, \\
\heun{-a/b}{(b^2-a^2)\!\left(\frac{a-1}{b}+1\right)}{-a-b,\,2-a-b}{1-a-b;1-a}{\frac{1}{x}}
 \!\equal \left(1-\frac1x\right)^{\!a} \! \left(1+\frac{b}{a x}\right)^{\!b},\\ \label{eq:cycheun4}
\frac{a\,(a+b)}{2b}x^2\,\heun{-b/a}{2(1-b/a)}{2,\,2-a-b}{3;\,1-a}{x}  \!\equal
1-(1-x)^{a}\left(1+\frac{a x}{b}\right)^{\!b},
\end{eqnarray}
in neighborhoods of, respectively, $x=1$, $x=-b/a$, $x=\infty$, $x=0$.
These functions are solutions of $\heunde{2,a,b,a+b}$. This Heun equation
is a pull-back transformation of $\hpgde{1,\la,\la}$ if and only if the ratio $a/b$ is a rational number. 
The minimal degree of the pull-back  covering is equal to the sum of the numerator
and the denominator of $a/b$.
\end{enumerate}
\end{theorem}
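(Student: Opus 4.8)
The plan is to derive everything from the ramification combinatorics of $\varphi$, then read off the exponents and the local solutions. Since we want the continuous parameter $\alpha$ to survive, only the singularity carrying exponent difference $1$ may be restricted; as that singularity of $\hpgde{1,\la,\la}$ is apparent (non-logarithmic), the equation is essentially a two-singularity equation whose genuine singularities, of exponent difference $\alpha$, I place at $z=1$ and $z=\infty$. A point of ramification index $e$ lying over a singularity of exponent difference $\rho$ acquires exponent difference $e\rho$; hence every preimage of $z=1$ or $z=\infty$ is a genuine singularity (a nonzero multiple of $\alpha$), whereas a ramified point over an ordinary value produces an apparent singularity whose exponent difference equals its index. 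Writing $p,q$ for the numbers of points over $\infty,1$ and $r$ for the number of apparent singularities (branch points over ordinary values), a Heun pull-back requires $p+q+r=4$.

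The key step is that Riemann--Hurwitz alone forces the ramification type. Summing $e-1$ over all branch points gives $2D-2=(D-p)+(D-q)+\sum_{\mathrm{app}}(e_i-1)$, and since each apparent singularity contributes at least $1$, this yields $p+q-r\ge 2$; together with $p+q+r=4$ it forces $r\le 1$, while $r=0$ would leave only two singular points (a hypergeometric, not Heun, equation). Hence $r=1$ and $p+q=3$, and feeding these back into Riemann--Hurwitz shows the sole apparent singularity has index exactly $2$ (exponent difference $2$). Up to the symmetry interchanging $z=1$ and $z=\infty$ I may take $p=1$, so that $\infty$ is totally ramified (one point of index $D$, exponent difference $D\la$) and $z=1$ has two preimages of indices $N,M$ with $N+M=D$ (exponent differences $N\la,M\la$). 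Normalising the ordinary value hosting the double branch point to $z=0$ makes $\varphi$ branch exactly over $\{0,1,\infty\}$, i.e. Belyi of degree $D$, and produces $\heunde{2,N\la,M\la,D\la}$. This gives (i)--(ii) once existence and uniqueness of the map with this ramification type are settled.

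For that I would exploit the rigidity of the data. Total ramification over $\infty$ makes $\varphi$ a polynomial of degree $D$, and the fibre over $1$ gives $1-\varphi(x)=c\,(x-p)^N(x-r)^M$. Differentiating, $\varphi'$ factors as $-c\,(x-p)^{N-1}(x-r)^{M-1}\bigl(D x-(Nr+Mp)\bigr)$, so the remaining critical point is $s=(Nr+Mp)/D$; requiring $\varphi(s)=0$ (it lies over the ordinary value $0$) determines $c$, and the residual affine freedom normalises $p=1$, $s=0$, forcing $r=-M/N$ and reproducing (\ref{eq:degenphi}). This simultaneously establishes existence and uniqueness up to M\"obius transformations. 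For the identities (iii) I would pull back the closed-form solutions of $\hpgde{1,\la,\la}$: the power-type Kummer solutions pull back (after the radical prefactor $\theta(x)$ and M\"obius normalisation at $x=1,-b/a,\infty$) to the power functions on the right of (\ref{eq:cycheun1}), (\ref{eq:cycheun2}) and the third identity, while the single non-power solution $\bigl(1-(1-z)^a\bigr)/az$ of (\ref{reghpg}) pulls back to (\ref{eq:cycheun4}); indeed $1-(1-x)^a(1+ax/b)^b$ vanishes to order $2$ at $x=0$, matching the exponent $2$ there, with $a(a+b)/2b$ the normalisation to the standard local Heun function. Setting $a=N\la$, $b=M\la$ makes these literal pull-backs precisely when $a/b=N/M\in\QQ$; since both sides of each identity are analytic in $a,b$ and agree on this dense set, the identities extend to all admissible $a,b$ (alternatively each may be checked by substitution into (\ref{eq:HE})). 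Finally $\heunde{2,a,b,a+b}$ arises as such a pull-back iff $\{a,b\}=\{N\la,M\la\}$ for some $\la$ and positive integers $N,M$, i.e. iff $a/b\in\QQ$, and taking $N/M$ in lowest terms minimises $D=N+M$ to the sum of numerator and denominator of $a/b$.

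The step I expect to be most delicate is the very first reduction: being certain that over the genuine singularities $z=1,\infty$ no preimage accidentally becomes regular or apparent (this is where genericity of $\la$ enters), and that apparent singularities can arise only from branch points over ordinary values, so that the count $p+q+r=4$ together with the Riemann--Hurwitz identity really does pin down the type. Once that bookkeeping is secured, the remainder is rigid polynomial algebra and routine normalisation of local solutions.
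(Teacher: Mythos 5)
Your proposal is correct and follows essentially the same route as the paper: a Hurwitz/branching count forcing the type $D = N+M$ over one singular fiber, total ramification over the other, and a single extra double point (the paper phrases this via its Lemma \ref{lm:hurwitz} and a case analysis on the number of points in the two $\alpha$-fibers, which is equivalent to your direct Riemann--Hurwitz bookkeeping), followed by the same explicit determination of $1-\varphi(x)=c\,(x-p)^N(x-r)^M$ from the location of the remaining critical point, and the same derivation of the identities in \refpart{iii} by pulling back the closed-form Kummer solutions and extending off $a/b\in\QQ$ by analytic continuation. The only cosmetic difference is that you treat the exponent-difference-$1$ singularity as fully removed and place the double branch point over an ordinary value, whereas the paper keeps that fiber with branching pattern $2+[1]_{D-2}$; the resulting computation and conclusions coincide.
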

\begin{proof} Let $D=\deg\varphi$.  Let $k$ denote the number of distinct points
above the two $\alpha$-points of $E(1,\alpha,\alpha)$. We have $k\in\{2,3,4\}$.
If $k=2$, the covering is cyclic and the pulled-back equation is $E(1,D\alpha,D\alpha)$.
If $k=4$, there are other branching points by Lemma {lm:hurwitz},
and those points would be excess singularities. 
If $k=3$, we are led to the branching pattern $M+N=D=2+[1]_{D-2}$ in the notation of \cite{HeunClass}.
By M\"obius transformations, we assign the points $x=\infty$, $x=0$, $x=1$ to the branching orders
$D$, $2$, $N$ (respectively) above $\varphi=\infty$, $\varphi=0$, $\varphi=1$ (respectively). 
The Belyi function has then the form $1-\varphi(x)=(1-x)^N(1-sx)^M$. The branching at $x=0$
gives $s=-N/M$.  Parts \refpart{i} and \refpart{ii} are shown.
% Computation of Belyi functions using logarithmic derivatives as suggested in \cite[\S 4.1]{HeunClass}
% leads to the unique (up to M\"obius transformations) function  in (\ref{eq:degenphi}).

Let $a=N\la$, $b=M\la$.
The formulas in \refpart{iii} are derived by writing down Heun-to-hypergeometric identities
similar to (\ref{H-2F1 1tr}) and (\ref{reghpgtr}), then evaluating the hypergeometric function using 
(\ref{reghpg}) and dropping the condition $a/b\in\QQ$ by the analytic continuation argument.
For example, the least trivial identity is
\begin{equation} \label{eq:trivpbf}
\heun{-M/N}{2(1-M/N)}{2,\,2-D\la}{3;\,1-N\la}{x}=
\frac{2M\,\varphi(x)}{ND\,x^2}\, %F(x)\left(1-\frac{Nx}D\right)^{D}\,
\hpg21{1-\la,1}{2}{\varphi(x)}. %\frac{NMx^2\,F(x)}{2D\,\big(1-N x/D \big)^D}}
\end{equation}
The formulas in \refpart{iii}  can be checked as follows. 
The Heun equation (\ref{eq:HE}) with  $(a,b,c,d,t,q)=$ \\
{$(0,-a-b,-1,1-a,-m/n,0)$} has this general solution:
\[
y(x)=C_1+C_2\big(x-1\big)^{a}\big(ax+b\big)^{b}.
\]
Formulas (\ref{eq:cycheun1})--(\ref{eq:cycheun4}) identify the most interesting Heun solutions
in the orbit of Maier's 192 Heun solutions \cite{Maier10}. 
The degree of a pull-back covering to $\heunde{2,a,b,a+b}$ with fixed $a/b\in\QQ$ 
must be a multiple of the sum of the numerator and the denominator.
\end{proof}

If the numbers $N,M$ are not co-prime, the covering of Theorem \ref{th:degen}
is the composition 
\begin{equation}
\hpgde{1,\la,\la}\pback{n}\hpgde{1,n\la,n\la}\pback{D/n}\heunde{2,N\la,M\la,D\la},
\end{equation}
where $n$ is a divisor of $\gcd(N,M)$. Putting $\alpha=1/D$ changes the pulled-back Heun equation
to the hypergeometric $\hpgde{2,N/D,M/D}$. Therefore the suitable Belyi covering
can be found in \cite[\S 5]{VidunasFE} as formula \cite[(33)]{VidunasFE} with 
$(k,\ell,n,m)=(D,N,0,1)$: %$k=D, \ell=N, n=0, m=1$:
\begin{eqnarray} \label{eq:trivpb}
  (1-z)  \mapsto  \frac{(1-x)^N}{\left(1-N x/D \right)^\degr}.
\end{eqnarray}
The covering $\varphi(x)$ in (\ref{eq:degenphi}) differs by the M\"obius transformation
$x\mapsto Dx/(Nx+M)$.
The pull-back covering % in (\ref{eq:degenphi})
can be written as
\begin{eqnarray} \label{phi:alt}
\varphi(x) = \frac{ND}{2M}\,x^2\,\heun{-M/N}{2(1-M/N)}{2,2-D}{3;1-N}{x}.
\end{eqnarray}
This is explained as follows. The specialization $\alpha=1$ gives a pull-back of the
the trivial "hypergeometric" equation $y''=0$ to $\heunde{2,N,M,D}$. We have here 
the Kleinian pull-back \cite{Klein1877} for $\heunde{2,N,M,D}$; its expression in terms
of Schwarz maps (as in \cite[\S 5]{VidunasHoeij}) gives (\ref{phi:alt}).

Identity (\ref{eq:trivpbf}) is valid for $\la=0$ as well, but then we must identify the Heun function as
\begin{equation}
-\frac{2M}{NDx^2}
\left(N\ln(1-x)+M\ln\left(1+\frac{Nx}M\right)\right).
\end{equation}

\begin{figure} \setlength{\unitlength}{1.3pt}
\[ \begin{picture}(230,116)(-16,-3)
\put(-10,90){\circle3} \put(8,90){\circle*3} \put(28,90){\circle3}  \put(48,90){\circle*3}
\put(-9,90){\line(1,0){36}}  \put(29,90){\line(1,0){19}} 
\put(8,108){\circle3}  \put(8,72){\circle3}  \put(8,73){\line(0,1){34}} 
\put(-5,77){\circle3} \put(-5,103){\circle3} \put(8,90){\line(-1,1){12}}  \put(8,90){\line(-1,-1){12}} 
\put(64,97){\circle3}  \put(64,83){\circle3}  \put(55,74){\circle3}  \put(55,106){\circle3}  
\put(48,90){\line(5,2){15}}  \put(48,90){\line(5,-2){15}}  
\put(48,90){\line(2,5){6}}  \put(48,90){\line(2,-5){6}}  
\put(112,90){\circle*3}  \put(130,108){\circle*3} \put(130,72){\circle*3}  \put(148,90){\circle*3}
\put(117,103){\circle3}  \put(117,77){\circle3} \put(143,103){\circle3}  \put(143,77){\circle3}
\qbezier(116,78)(108,90)(116,102) \qbezier(144,78)(152,90)(144,102)
\qbezier(118,76)(130,68)(142,76)  \qbezier(118,104)(130,112)(142,104)
\put(157,106){\circle3}  \put(157,74){\circle3} \put(184,90){\circle3}  
\put(175,106){\circle*3}  \put(175,74){\circle*3}
\qbezier(156,75)(140,90)(156,105)  \qbezier(158,73)(179,67)(184,89) 
\qbezier(158,107)(179,113)(184,91) 
\put(14,30){\circle*3}  \put(32,48){\circle*3} \put(32,12){\circle*3}  \put(50,30){\circle*3}
\put(19,43){\circle3}  \put(19,17){\circle3} \put(45,43){\circle3}  \put(45,17){\circle3}
\qbezier(18,18)(10,30)(18,42) \qbezier(46,18)(54,30)(46,42)
\qbezier(20,16)(32,8)(44,16)  \qbezier(20,44)(32,52)(44,44)
\put(68,30){\circle3}  \put(86,30){\circle*3} 
\put(50,30){\line(1,0){17}} \put(69,30){\line(1,0){17}}
\put(132,30){\circle*3}  \put(150,48){\circle*3} \put(150,12){\circle*3}  \put(168,30){\circle*3}
\put(137,43){\circle3}  \put(137,17){\circle3} \put(163,43){\circle3}  \put(163,17){\circle3}
\qbezier(136,18)(128,30)(136,42) \qbezier(164,18)(172,30)(164,42)
\qbezier(138,16)(150,8)(162,16)  \qbezier(138,44)(150,52)(162,44)
\put(186,30){\circle3}  \put(204,30){\circle*3}  \put(222,30){\circle3} 
\put(168,30){\line(1,0){17}} \put(187,30){\line(1,0){34}}
\put(-19,59){\refpart{a}}  \put(99,59){\refpart{b}} 
\put(1,2){\refpart{c}} \put(119,2){\refpart{d}} 
\end{picture}  \]
\caption{Dessins d'enfant for Belyi pull-backs to $\heunde{2, N\la, M\la, D\la}$ 
and $\heunde{1/2, 3/2, N\la, M\la}$}
\label{fig:dessins}
\end{figure}

\begin{remark} \rm
As is known \cite{Schneps94}, Belyi functions (up to M\"obius transformations) are in a bijective
correspondence with {\em dessins d'enfant} (up to homotopy). The latter are certain bi-coloured graphs
on the Riemann sphere (in our case) such that the incidence degrees of cells and vertices 
of both colors follow the branching pattern. Up to homotopy, a dessin d'enfant of $\varphi(x)$
is the pre-image of the real interval $[0,1]$ with the points above $z=0$, $z=1$ coloured (say)
black and white, respectively. The points above $z=\infty$ are represented by the cells. 
The dessin is a tree when there is a unique cell (of degree $D$).
In particular, the dessin for (\ref{eq:degenphi}) is depicted in Figure \ref{fig:dessins}\refpart{a}.
The number of ``sticks" coming out of the black vertices is $N-1$ and $M-1$.
It is straightforward to see that no other dessins are possible for its branching pattern,
confirming the uniqueness of $\varphi(x)$.
\end{remark}

\begin{remark} \label{rm:cyclic} \rm
Formulas (\ref{eq:cycheun1})--(\ref{eq:cycheun2}) are fancy expressions for the power 
function $X^b$, yet with a free parameter $a$. A M\"obius transformation gives
the straightforward expression
\begin{equation}
\heun{1+b/a}{-b\,(a+1)}{a,\,-b}{1+a;-1}{\frac{a+b}{a}\,x} =  \left(1-x\right)^{b}.
\end{equation}
A fractional-linear transformation from \cite[(B.3)]{Vidunas2009b} 
and the change $b\mapsto-b$ give the symmetric expression
\begin{equation}
\heun{a/(a-b)}{\frac{a\,b\,(a+1)}{a-b}}{a,\,b}{1+a;1+b}{\,x} =  \left(1-x\right)^{-b}.
\end{equation}
Since one of the singularities of the considered Heun equation is apparent with the local exponent
difference 2, expressions in terms of $\hpgo32$ functions in % \cite{Letesier94}, 
\cite[\S 5]{MaierPH} apply. The result (with $e=a$ in \cite[Theorem 5.3]{MaierPH}) is trivial:
\[ %begin{equation}
\left(1-x\right)^{-b}=\hpg32{a,b,a+1}{a+1,a}{x}.
\] %end{equation}
However, the expression in terms of contiguous $\hpgo21$ functions in 
\cite[Corollary 5.3.1]{MaierPH} is
\begin{equation}
\left(1-x\right)^{-b}=\hpg21{a,b}{a+1}{x}+\frac{b\,x}{a+1}\;\hpg21{a+1,b+1}{a+2}{x}.
\end{equation}
This relates $\heunde{2,\la,\lb,\la+\lb}$ to the contiguous orbit of $\hpgde{\la,\lb+1,\la+\lb}$.
Interestingly, the monodromy representation for the Heun equation is completely reducible,
while the monodromy representation for the contiguous hypergeometric functions has (generally)
only one invariant subspace of dimension 1.
\end{remark}

\section{Transformations from dihedral $\hpgde{1/2,1/2,\la}$}
\label{sec:dihedral}

The monodromy representation of 
$\hpgde{1/2,1/2,\la}$ with general $\alpha\in\CC$ is an infinite dihedral group, 
hence these hypergeometric equations and their solutions are % called 
said to be {\em dihedral}. 
Their hypergeometric solutions are very explicit:
\begin{eqnarray}   \label{dihedr1}
\hpg{2}{1}{\frac{a}{2},\,\frac{a+1}{2}\,}{a+1}{\,z} \equal
\left(\frac{1+\sqrt{1-z}}{2} \right)^{-a},\\  \label{dihedr2}
\hpg{2}{1}{\frac{a}{2},\,\frac{a+1}{2}\,}{\frac{1}{2}}{\,z} \equal
\frac{(1-\sqrt{z})^{-a}+(1+\sqrt{z})^{-a}}{2}. %\\ \label{dihedr3}
%\hpg{2}{1}{\frac{a+1}{2},\,\frac{a+2}{2}}{\frac{3}{2}}{\,z} \equal
%\frac{(1-\sqrt{z})^{-a}-(1+\sqrt{z})^{-a}}{2\,a\,\sqrt{z}} \quad (a\neq 0).
\end{eqnarray}
General expressions for contiguous $\hpgo21$ functions are presented in \cite{Vidunas2008a}.
As shown in  \cite[\S 4]{VidunasDihTr}, there are pull-back transformations 
$\hpgde{1/2,1/2,\la}\pback{d}\hpgde{1/2,1/2,n\la}$ of any degree $d$.
The pull-back coverings have the form $x\mapsto x\theta_2(x)^2/\theta_1(x)^2$,
where 
\begin{eqnarray}
\theta_1(x)=\hpg21{-\frac{n}2,-\frac{n-1}2}{1/2}{x},\qquad
\theta_2(x)=n\,\hpg21{-\frac{n-1}2,-\frac{n-2}2}{3/2}{x}.
\end{eqnarray}
The components $\theta_1(x)$, $\theta_2(x)$ can be expressed in terms of Chebyshev polynomials 
$T_n(z)$, $U_{n-1}(z)$, respectively.  
The identity $(1-\sqrt{x})^n=\theta_1(x)-\theta_2(x)\sqrt{x}$ holds.
These transformations can be composed with the simplification 
$\hpgde{1/2,1/2,\la}\pback{2}\hpgde{1,\la,\la}$ to a cyclic monodromy group.

\begin{remark} \label{rm:nonb} \rm
Pull-back transformations of $\hpgde{1/2,1/2,\la}$ 
to Heun equations may involve non-Belyi 
coverings. %\footnote{According to \cite[Proposition 3.3]{HeunClass}, 
% the only other case when non-Belyi maps occur in Heun-to-hypergeometric  
% transformations is when the Fuchsian equations have a basis of algebraic solutions.}.
An example is the composition
\begin{equation} \label{eq:dihcomp}
\hpgde{1/2,1/2,\la}\pback2 %\hpgde{1,\la,\la}$ and $
\hpgde{1,\la,\la}\stackrel{N+M}{\xleftarrow{\hspace*{24pt}}}\heunde{2,N\la,M\la,(N+M)\la},
\end{equation}
Here the second transformation is as in Theorem $\ref{th:degen}$, 
but the fiber of the ``singularity" with the exponent difference 1 may be an arbitrary point 
of $\PP^1_z\setminus\{0,1,\infty\}$.
For instance, taking $N=M=1$ we derive the covering 
\begin{equation}
\varphi_s(x)=\frac{4s\,x\,(2-x)}{(x^2-2x-s)^2}
\end{equation}
with a parameter $s$. The branching points lie above 4 points, 
namely $\varphi(x)\in\{0,1,\infty,4s/(s+1)^2\}$. 
The transformed Fuchsian equation for
\begin{equation}
\left(1+\frac{2x-x^2}{s}\right)^{\!-e}\,
\hpg21{\frac{e}2,\frac{e+1}2}{1+e}{\frac{4s\,x\,(2-x)}{(x^2-2x-s)^2}}
\end{equation}
is Heun's equation (\ref{eq:HE}) with $(t,q,a,b,c,d)=(2,0,0,2e,1+e,-1)$.
The specialized coverings $\varphi_s(x)$ with $s=\pm1$ are Belyi coverings. 
According to \cite[Proposition 3.3]{HeunClass}, 
the only other case when non-Belyi maps occur in Heun-to-hypergeometric  
transformations is when the Fuchsian equations have a basis of algebraic solutions.
\end{remark} 
\begin{theorem} \label{th:dihedral}
\begin{enumerate}
\item The pull-back transformations from $\hpgde{1/2,1/2,\la}$ to Heun equations 
with respect to a non-Belyi covering $\varphi(x)$ are compositions $(\ref{eq:dihcomp})$.
The non-Belyi coverings are parametric, and may specialize to Belyi coverings.
\item Pull-back transformations from $\hpgde{1/2,1/2,\la}$ to Heun equations 
with respect to other Belyi coverings $\varphi(x)$ exist  
and are unique up to M\"obius transformations 
for any pair $(M,N)$ of non-equal positive integers, $M\neq N$. 
The transformed Heun equation is $\heunde{1/2,3/2,N\la,M\la}$.
\item The degree of the mentioned Belyi covering $\varphi(x)$ is $D=M+N$. 
Up to M\"obius transformations, the Belyi covering is $\varphi(x)=x^3\,\Theta_2(x)^2/\Theta_1(x)^2$,
where $\Theta_1(x),\Theta_2(x)$ are the polynomials in $x$ determined by
\begin{equation} \label{eq:dihex}
\left(1+\sqrt{x}\right)^N\left(1-\frac{N\sqrt{x}}M\right)^{\!M}=\Theta_1(x)+x^{3/2}\,\Theta_2(x).
\end{equation}
\item The polynomials $\Theta_1(x)$, $\Theta_2(x)$ are Heun polynomials:
\begin{eqnarray}
\Theta_1(x)\equal \heun{M^2/N^2}{MD/4N} %{\frac{M}4\left(1+\frac{M}{N}\right)}
{-\frac{D}2,-\frac{D-1}2}{-\frac12;\,1-N}{x},\\
\Theta_2(x)\equal \frac{ND(M-N)}{3M^2} \,
\heun{M^2/N^2}{\frac32\left(1+\frac{M^2}{N^2}\right)-\frac{5MD}{4N}}
{-\frac{D-3}2,-\frac{D-4}2}{\frac52;\,1-N}{x}.
\end{eqnarray}
\end{enumerate}
\end{theorem}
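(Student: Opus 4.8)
The plan is to follow the template of the proof of Theorem~\ref{th:degen}, analysing the branching of $\varphi$ over the three singular points of $\hpgde{1/2,1/2,\la}$ while keeping only the two $\frac12$-points restricted. Assign the free exponent difference $\la$ to one point and $\frac12$ to the other two. The two Heun singularities carrying the parameter must lie over the $\la$-point; since $M\neq N$ this fibre has exactly two preimages, of branching orders $N$ and $M$, producing the exponent differences $N\la$ and $M\la$. Over each $\frac12$-point every preimage of branching order $2$ becomes an ordinary point (exponents $0,1$, no logarithm, since the local solutions $\sim z^0,z^{1/2}$ pull back to $1$ and $x-r$), so the remaining two Heun singularities must come from preimages of order $\neq2$: an order-$3$ point giving exponent difference $\frac32$ and an order-$1$ (unramified, at $x=\infty$) point giving $\frac12$. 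This yields $\heunde{1/2,3/2,N\la,M\la}$. To separate parts \refpart{i} and \refpart{ii} I would invoke the quadratic simplification $\hpgde{1/2,1/2,\la}\pback{2}\hpgde{1,\la,\la}$: if $\varphi$ factors through it, the intermediate exponent-$1$ point may be placed over an arbitrary value of $\PP^1_z\setminus\{0,1,\infty\}$, forcing a non-Belyi covering and reproducing the compositions (\ref{eq:dihcomp}) of Theorem~\ref{th:degen}; otherwise $\varphi$ is Belyi. Counting the distinct points over $\{0,1,\infty\}$ with Lemma~\ref{lm:hurwitz} confirms $\varphi$ is Belyi in the second case and, together with a dessin argument as in Figure~\ref{fig:dessins}\refpart{c}--\refpart{d}, pins down the pattern uniquely for each admissible $(M,N)$.

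For part \refpart{iii} I would exploit the $\sqrt{x}$-symmetry underlying dihedral pull-backs. Writing $u=\sqrt x$ and $F(u)=(1+u)^N(1-Nu/M)^M$, its even/odd decomposition is exactly (\ref{eq:dihex}), $F(u)=\Theta_1(x)+u^3\Theta_2(x)$; note that the coefficient of $u^1$ vanishes ($N-N=0$), which makes the odd part divisible by $u^3$ and encodes the order-$3$ branching at $x=0$. Then $F(-u)=\Theta_1(x)-u^3\Theta_2(x)$ gives
\[
1-\varphi(x)=\frac{\Theta_1^2-x^3\Theta_2^2}{\Theta_1^2}=\frac{F(u)F(-u)}{\Theta_1(x)^2}=\frac{(1-x)^N\big(1-N^2x/M^2\big)^M}{\Theta_1(x)^2},
\]
from which the fibres read off directly: over $\varphi=1$ the points $x=1$, $x=M^2/N^2$ of orders $N,M$; over $\varphi=\infty$ the roots of $\Theta_1$, of order $2$; over $\varphi=0$ the point $x=0$ of order $3$ together with the roots of $\Theta_2$, of order $2$; and $x=\infty$ unramified. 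The logarithmic derivative
\[
\frac{F'(u)}{F(u)}=\frac{N}{1+u}-\frac{NM}{M-Nu}=\frac{-NDu}{(1+u)(M-Nu)}
\]
shows $F'$ vanishes only at $u=0$ away from the zeros of $F$, which I would use to verify that $\varphi$ ramifies exactly as claimed, with no stray branch points, so the covering realises $\heunde{1/2,3/2,N\la,M\la}$ and has degree $D=M+N$.

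For part \refpart{iv} I would recognise $\Theta_1(x)$ and $x^{3/2}\Theta_2(x)$ as the symmetric and antisymmetric parts of $F(u)$ under $u\mapsto-u$, hence as the two local solutions at $x=0$, with exponents $0$ and $\frac32$, of the second-order Fuchsian equation in $x=u^2$ whose solution space is $\langle F(u),F(-u)\rangle$. This equation has singular points exactly at $x\in\{0,1,M^2/N^2,\infty\}$ and is the $\la=1$ specialisation of $\heunde{1/2,3/2,N\la,M\la}$; it is the Kleinian pull-back of $y''=0$, in parallel with (\ref{phi:alt}) in the degenerate case. Matching the four singular points and their exponents identifies the Heun parameters $t=M^2/N^2$, $c=-\frac12$, $d=1-N$ and $a,b=-\frac D2,-\frac{D-1}2$ for $\Theta_1$, and the $x^{3/2}$-shifted values $c=\frac52$, $a,b=-\frac{D-3}2,-\frac{D-4}2$ for $\Theta_2$. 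The accessory parameters $q$ and the normalising constant $ND(M-N)/3M^2$ for $\Theta_2$ I would then fix by matching leading coefficients and the two-term recursion that a polynomial Heun solution must satisfy; since $\Theta_1(0)=F(0)=1$ the normalisation of $\Theta_1$ is automatic.

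The main obstacle I anticipate is the branching analysis of parts \refpart{i}--\refpart{ii}: proving the dichotomy rigorously (that a non-Belyi $\varphi$ must factor through the quadratic $\hpgde{1/2,1/2,\la}\pback{2}\hpgde{1,\la,\la}$) and that, in the Belyi case, no branching pattern other than the one above can occur, so that $\varphi$ is unique up to M\"obius transformations for each $(M,N)$ with $M\neq N$. A secondary technical point is to certify that $\Theta_1$ and $\Theta_2$ have only simple roots and share none with each other or with $x\in\{0,1,M^2/N^2\}$, so that all the order-$2$ ramification is genuine and the count $d+2$ of Lemma~\ref{lm:hurwitz} is attained; I expect to settle this through the resultant or $\gcd$ of $\Theta_1,\Theta_1'$ (respectively $\Theta_2,\Theta_2'$) read off from the logarithmic-derivative identity above, exactly as in the degenerate computation.
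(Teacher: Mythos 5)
Your treatment of parts \refpart{iii} and \refpart{iv} is sound and, if anything, more self-contained than the paper's: where the paper invokes Theorems 5.1 and 5.6 of \cite{VidunasDihTr}, you verify the branching of $\varphi=x^3\Theta_2^2/\Theta_1^2$ directly from the even/odd decomposition of $F(u)=(1+u)^N(1-Nu/M)^M$ and its logarithmic derivative, and you identify the Heun parameters of $\Theta_1,\Theta_2$ by matching singular points and exponents of the second-order equation annihilating $F(\pm u)$, fixing $q$ from the series recursion. That route does work; for instance the coefficient of $x$ in $\Theta_1$ is $-ND/(2M)$, which matches $h_1=q/(ct)$ with $q=MD/(4N)$, $c=-1/2$, $t=M^2/N^2$.

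The genuine gap is in parts \refpart{i} and \refpart{ii}, and it is exactly what you flag as your ``main obstacle''. First, your branching data is assumed rather than derived: you posit that the $\la$-fibre has exactly two preimages and that the two $\tfrac12$-fibres contribute one order-$1$ and one order-$3$ non-branch point, but the theorem is a classification, so the other configurations must be excluded. The paper does this by a case analysis on $(k,\ell)$, with $k$ the number of simple branch points over the two $\tfrac12$-points and $\ell$ the number of points over the $\la$-point: $(d,2)$ and $(d-1,1)$ give hypergeometric rather than Heun targets, $(d-1,2)$ and $\ell=4$ give too many singularities, $(d,1)$ and $(d-2,1)$ too few points, leaving only $(k,\ell)=(d-2,2)$ (your case) and $\ell=3$ (the composition case). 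Second, and more seriously, your proposed dichotomy ``$\varphi$ factors through the quadratic map $\Rightarrow$ $\varphi$ is non-Belyi'' is false: Remark \ref{rm:nonb} exhibits specializations $s=\pm1$ of $\varphi_s$ that are Belyi yet still factor through $\hpgde{1/2,1/2,\la}\pback2\hpgde{1,\la,\la}$, which is precisely why part \refpart{ii} speaks of ``other'' Belyi coverings. What actually requires proof is the converse for the $\ell=3$ configurations: that every such covering, Belyi or not, factors through the degree-two map. The paper obtains this by specializing $\la\in\{1/K,1/N,1\}$, using the classification of degenerate hypergeometric equations \cite{Vidunas2007} to force one of $K,N,M$ to equal the sum of the other two, and then a function-field argument (at $\la=1$ the monodromy lies in $\ZZ/2\ZZ$, so the extension factors through the $\ZZ/2\ZZ$-invariant subfield). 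None of that machinery is in your sketch, and without it the factorization claim of \refpart{i}, the uniqueness claim of \refpart{ii}, and the exclusion of $M=N$ (which the paper settles by noting that $\la=1/N$ would yield a pull-back to the non-existent $E(1/2,3/2)$) remain unproved.
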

\begin{proof} 
The non-singular points above the singularities of  $E(1/2,1/2,\alpha)$ are the simple branching
points in the two fibers with the exponent difference $1/2$. Let $k$ denote the number of simple 
branching points in those two fibers, and let $\ell$ denote the number of points in the third fiber. 
We have $k\le d$, \mbox{$\ell\le 4$}.  Besides, $k\ge d-2$ by Lemma \ref{lm:hurwitz}.
The case $(k,\ell)=(d-1,2)$ leads to too many singularities.
There are too few points ($<d+2)$ in the three fibers when \mbox{$(k,\ell)\in\{(d,1),(d-2,1)\}$}. 
The cases \mbox{$(k,\ell)\in\{(d,2),(d-1,1)\}$} lead 
to hypergeometric transformations to $E(1,\beta,\beta)$ and $E(1/2,1/2,\beta)$.
The case $(k,\ell)=(d-2,2)$ leads to the branching patterns
$[2]_{d/2-1}=[2]_{(d-4)/2}+3+1=N+M$ and $[2]_{(d-1)/2}+1=[2]_{(d-3)/2}+3=N+M$.
We can have $\ell=4$ only with $k=d$, but then we have more
singularities outside the 3 fibers. With $\ell=3$, we can obtain non-Belyi coverings 
with the branching $[2]_{d/2}=[2]_{d/2}=K+N+M$ in the three fibers,
and Belyi coverings with branching patterns $[2]_{d/2}=[2]_{(d-4)/2}+4=K+N+M$.

Transformations to Heun equations are obtained when $(k,\ell)=(d-2,2)$ or $\ell=3$.

Consider first the non-Belyi coverings in the $\ell=3$ case. Alter the specialization $\alpha=1/K$,
they would pull-back $E(1/2,1/2,\alpha)$ to $E(2,N/K,M/K)$. 
By \cite[\S 7]{Vidunas2007}, the latter equation has a completely 
reducibly monodromy (rather than logarithmic singularities) only when $N/K\pm M/K\in\{0,\pm1\}$.
The case $N=M$ is ruled out by the specialization $\alpha=1/N$, as it leads to
a pull-back to $E(2,K/N)$ that exists only when $K=2N$; but that is an instance of
the $N/K+M/K=1$ case. We conclude that one of the numbers $K,N,M$ is the sum 
of the other two. The specialization $\alpha=1$ gives a pull-back to $\heunde{2,N,M,K}$
with the monodromy smaller than $\ZZ/2\ZZ$. The functions fields must factor through
the maximal $\ZZ/2\ZZ$-invariant field, hence the factorization 
$\hpgde{1/2,1/2,1}\pback2 \hpgde{1,1,1}\pback{d/2}\heunde{2,N,M,K}$, giving (\ref{eq:dihcomp}).
As already mentioned in Remark \ref{rm:nonb}, the non-Belyi coverings have a parameter.

The same specialization $\alpha\in\{1/K,1/N,1\}$ arguments apply to the Belyi coverings with $\ell=3$. 
Their dessins d'enfant look like in Figure \ref{fig:dessins}\refpart{b}.
They are a special case of the one-parameter non-Belyi covering.

Finally, the Belyi coverings with $(k,\ell)=(d-2,2)$ give the transformations
$\hpgde{1/2,1/2,\la}\pback{d}\heunde{1/2,3/2,N\la,M\la}$, with $d=N+M$.
The specialization $\la=1/M$ gives a transformation to $\heunde{1/2,3/2,N/M}$,
so we can apply %\cite[Theorem 6.1]{VidunasFE} with $(m,n,k,\ell)=(1,0,M,N)$ or
\cite[Theorem 5.1]{VidunasDihTr} with $(k,\ell,m,n)=(1,0,M,N)$.
The obtained pullback covering is described in \refpart{iii}. 
In particular, multiplying (\ref{eq:dihex}) with its $\sqrt{x}$-conjugate gives
\begin{equation}
\left(1-x\right)^N\left(1-\frac{N^2\,x}{M^2}\right)^{\!M}=\Theta_1(x)^2-x^{3}\,\Theta_2(x)^2,
\end{equation}
convincing us that $\varphi(x)$ is a Belyi covering. 
We see that the fourth singular point of the pull-backed Heun equation is $t=M^2/N^2$.
Besides, $\Psi(x)$ of \cite[\S 5]{VidunasDihTr} is equal, up to a constant multiple, 
to \mbox{$N^2x-M^2$}. Theorem 5.6 in \cite{VidunasDihTr} gives differential equations for
$\Theta_1(x),\Theta_2(x)$, and there we recognize Heun's equations $\heunde{1/2,3/2,N,M}$
in our particular setting. This allows us to identify the expressions in \refpart{iv}.
The covering $\varphi(x)$ degenerates when $M=N$. There is no pullback transformation
$\hpgde{1/2,1/2,\la}\pback{2N}\heunde{1/2,3/2,N\la,N\la}$ at all,
because the specialization $\la=1/N$ gives a pull-back to the non-existent  $E(1/2,3/2)$.
\end{proof}

As noticed in \cite[\S 5.2]{VidunasDihTr}, for $N=1$ we have
\begin{eqnarray*}
\Theta_1(x)=\hpg21{-\frac{M}2,-\frac{M+1}2}{-1/2}{\frac{x}{M^2}},\qquad
\Theta_2(x)=\frac{M^2-1}{3M^2}\,\hpg21{-\frac{M-2}2,-\frac{M-3}2}{5/2}{\frac{x}{M^2}},
\end{eqnarray*}
and for $N=2$ we can write $\Theta_1(x),\Theta_2(x)$ as $\hpgo32$ polynomials.
The dessin d'enfant for the Belyi covering of \refpart{iii} is as depicted
in Figure \ref{fig:dessins}\refpart{c} or \refpart{d}. The degree of the bounded cell is $\min(M,N)$.

Like in Theorem (\ref{th:degen})\refpart{iii}, Heun solutions of the pulled-back dihedral equation
can be expressed by writing down Heun-to-hypergeometric identities
and evaluating the hypergeometric function using formulas like (\ref{dihedr1})--(\ref{dihedr2}).
After setting $a=N\alpha$, $b=M\alpha$ and dropping the condition $a/b\in\QQ$ 
by the analytic continuation argument, we get the following formulas:
\begin{eqnarray}
\heun{b^2/a^2}{-\frac{b(a+b)}{4a}}{\frac{a+b}2,\,\frac{a+b+1}2}{-\frac12;1+a}{x}\equal 
\frac{\big(1+\sqrt{x}\big)^{-a}\left(1-\frac{a}{b}\sqrt{x}\right)^{-b}
+\big(1-\sqrt{x}\big)^{-a}\left(1+\frac{a}{b}\sqrt{x}\right)^{b}}2, \qquad\\
\heun{\frac{a^2-b^2}{a^2}}{\frac{(a+b)^2(a+1)}{4a}}{\frac{a+b}2,\frac{a+b+1}2}{1+a;-\frac12}{1-x}
\equal \left(\frac{1+\sqrt{x}}2\right)^{-a}\left(\frac{b-a\sqrt{x}}{b-a}\right)^{-b}.
\end{eqnarray}
Another formula is
\begin{eqnarray}
&& \heun{b^2/a^2}{\frac{5ab(a+b)+6(a^2+b^2)}{4a^2}}
{\frac{a+b+3}2,\,\frac{a+b+4}2}{\frac52;\,1+a}{x}= \nonumber\\
&& \hspace{60pt} \frac{3b^2\,x^{-3/2}}{2a(a^2-b^2)} \textstyle
\left( \big(1+\sqrt{x}\big)^{-a}\left(1-\frac{a}{b}\sqrt{x}\right)^{-b}
-\big(1-\sqrt{x}\big)^{-a}\left(1+\frac{a}{b}\sqrt{x}\right)^{-b}\right). \qquad
\end{eqnarray}
The Heun solutions of the same equation with the argument $1/x$ evaluate as follows,
after the change $x\mapsto 1/x$:
\begin{eqnarray}
& & \heun{a^2/b^2}{\frac{(a^2-b^2)^2+a^3+b^3}{4b^2}}{\frac{a+b}2,\,\frac{a+b+3}2}{\frac12;\,1+a}{x}
= \frac{\big(1+\sqrt{x}\big)^{-a}\left(1-\frac{b}{a}\sqrt{x}\right)^{-b}
+\big(1-\sqrt{x}\big)^{-a}\left(1+\frac{b}{a}\sqrt{x}\right)^{b}}2, \qquad \\
& & \heun{a^2/b^2}{\frac{(a^2-b^2)^2+3(a^3+b^3)+2(a^2+b^2)}{4b^2}}
{\frac{a+b+1}2,\,\frac{a+b+4}2}{\frac32;\,1+a}{x} = \nonumber \\
& & \hspace{112pt} \frac{a\,x^{-1/2}}{2(b^2-a^2)} \textstyle
\left( \big(1+\sqrt{x}\big)^{-a}\left(1-\frac{b}{a}\sqrt{x}\right)^{-b}
-\big(1-\sqrt{x}\big)^{-a}\left(1+\frac{b}{a}\sqrt{x}\right)^{-b}\right).
\end{eqnarray}
These formulas can be applied to evaluate all solutions of $\heunde{1/2,3/2,a,b}$.

\end{document}